\definecolor{darkgreen}{rgb}{0,0.7,0}
 \definecolor{exp}{rgb}{0.85,0.7,0}
  \definecolor{msk}{rgb}{0.65,0.3,0.3}
\definecolor{msk2}{rgb}{0.75,0.3,0.1}
\definecolor{r}{rgb}{0.75,0.1,0.4}
\definecolor{exp2}{rgb}{0.85,0.6,0.1}
\makeatletter \@addtoreset{equation}{section}}
\newcommand{\N}{\mathbb{N}}
\newcommand{\Ra}{{\Rightarrow}}
\newcommand{\beq}{\begin{equation}}
\newcommand{\eeq}{\end{equation}}
\newcommand{\bml}{\begin{multline}}
\newcommand{\eml}{\end{multline}}
\newcommand{\bpm}{\begin{pmatrix}}
\newcommand{\epm}{\end{pmatrix}}
\newcommand{\bbm}{\begin{bmatrix}}
\newcommand{\ebm}{\end{bmatrix}}
\newcommand{\bcm}{}
\newtheorem{theorem}{Theorem}[section]
\newtheorem{lemma}[theorem]{Lemma}   
\definecolor{ungu}{rgb}{0.8 , 0  , 0.7 }
 \definecolor{exp}{rgb}{0.85,0.7,0}
  \definecolor{msk}{rgb}{0.65,0.3,0.3}
\definecolor{msk2}{rgb}{0.75,0.3,0.1}
\definecolor{r}{rgb}{0.75,0.1,0.4}
\definecolor{exp2}{rgb}{0.85,0.6,0.1}
\definecolor{exp4}{rgb}{0.85,0.3,0.5}
\definecolor{vvs}{rgb}{0.5,0.4,0.7}
\begin{document}

\title{Bilinear characterizations of   companion matrices}

\author{ 
Minghua Lin\\ 
Department of Mathematics and Statistics\\
University of Victoria,  BC, 
\\Canada,  V8W 3R4 
\and
Harald~K. Wimmer
                       \\
Mathematisches Institut\\
Universit\"at W\"urzburg\\
97074 W\"urzburg, Germany\\
}
\date{\today}

\maketitle 

 \begin{abstract} 

Companion matrices of the second type are characterized by
properties that involve bilinear maps.

\vskip 2cm
\noindent
{\bf AMS Subject Classification (2010):\,}
15A03, 
15A15, 
93B05.   

\vskip 1cm
\noindent
{\bf Keywords:\,}
Companion matrix, reachability matrix, bilinear map.
\tolerance 200

\vskip 2cm
\noindent
{\bf Corresponding Author:}\\
Harald K. Wimmer\\
Mathematisches Institut\\
Universit\"at W\"urzburg\\
97074 W\"urzburg, Germany
\flushleft{
{\textsf e-mail:}
~~\texttt{\small wimmer@mathematik.uni-wuerzburg.de} }
\flushleft{
{\textsf e-mail:} 
\texttt{  \small mlin87@ymail.com } 
 }

\end{abstract}

\section{Introduction} 

Let  $ K $ be a field.  The matrix 
\beq
 \label{eq.com}
  F_p =
\begin{bmatrix}
      0   &  .    &    &    &      &   p_0
\\
      1   &  0    & .   &   &      &   p_1
\\
          &   .   &  . & . &       &
\\
          &       &  . &  . &  .   &
\\
          &       &   & . &  .    &   .
\\
      0   &  0    &    &  &  1    &   p_{n-1}
\end{bmatrix}
\eeq
is the {\em{second companion matrix}}  
or (in the terminology of \cite{BTh}) 
 the {\em{companion matrix of the second  type}} 
associated with  
\[  p = [  p_0, p_1, \dots , p_{n-1} ] ^T  \in K^n . 
\]
It is well known (see e.g.\ \cite{DeT} for references)
 that companion matrices  are important 
in  linear algebra,
numerical analysis and applications,
e.g.\ in 
 systems and control theory and signal processing. 
In this paper we focus on  bilinear properties of companion matrices
that play a role  in the single-input case 
of  sensor-only fault detection  and identification \cite{B4},  \cite{Br}.

Let $ A  \in K^{ n \times n}   $, and 
$ b= [b_0, b_1, \dots , b_{n-1} ] ^T\in K ^n,  \:
g   
\in K ^n $. 
Set $ e_{n-1} = [0, \dots , 0 , 1] ^T \in K^n $ such that $F_p e_{n-1} = p $. 
The maps 
\[
 h: K^n \times K^n 
\to  K ^n ,  \:   
(b,g) \mapsto  h (A; b , g) =
[b, A, \dots , A ^{n-1} b] g  ,
\] 
and 
\begin{multline}   \label{eq.sbu} 
u :  K^n \times K^n \to K^n ,  \;  
 (b,g)
\mapsto  u (A; b , g)  = 
\\
\left[  
\begin{array}{l}
e_{n-1} ^T (b_0 I _n +  b_1 A +     b_2 A ^2 +   
 \cdots  
  b_{n-1} A^{n-1} )  
\\
e_{n-1} ^T (b_1 I _n +  b_2 A + 
 \cdots 
  b_{n-1} A^{n-2} )  
\\  \vdots 
\\
e_{n-1} ^T (  b_{n-2} I _n  +   b_{n-1} A  ) 
\\ e_{n-1} ^T (  b_{n-1} I _n )  
\end{array} 
\right] g 
\end{multline} 
are bilinear. 
It is the purpose of  this note  to
 show that a matrix $A$ is a second companion matrix 
if and only if the maps $ h (A;  b , g) $ and $ u(A;  b , g) $, respectively,
are symmetric.   
These results will be proved in  Section~\ref{sct.rslt}. 
In Section~\ref{sct.blck}  we deal with  block companion matrices and we   
describe extensions of results of  Section~\ref{sct.rslt} to matrix polynomials.

 \section{The main results}  \label{sct.rslt}

We shall use the following notation. 
The vectors  
\[
 e_0 = [1, 0 , \dots , 0 ] ^T , \,   \dots,  \,
e_{n-1} =  [0 , \dots , 0, 1] ^T ,  
\]
 are  the unit vectors of $K ^n $.
The characteristic polynomial of a matrix $ A  \in K^{ n \times n}   $
will be denoted by $ \chi_A(z) $.  
If $g \in K^n$  then the  Krylov matrix 
\[
 R( A , g) \,  = \, [ g, Ag, \dots , A ^{n-1} g] \, \in \, K^{ n \times  n}
\]
is the \!{\em{ reachability matrix}} (see e.g.\ \cite{Kai}) of the pair
\,$(A,g)$.    Note that  
\[
R (A, b) g  = h (A; b, g).
\] 
We recall \cite{Kai} that $A $ is similar to a companion matrix if and only if 
 $ R(A, g ) $ has full rank for some $g \in K^n$.
With  a vector $b   = [ b_0, b_1, \dots , b_{n-1}  ]^T  \in K^n $  
we associate the  polynomial  
  \,$b(z) =  
 b_0 + b_1 z + \cdots + b_{n-1} z ^{n-1} $.
Thus  $ b(A) = \sum _{i = 0 } ^{n-1} b_i A^i $. 
In particular, $e_0(z) = 1$, $\dots$, $ e_{n-1}(z) = z^{n-1}   $. 
Moreover,   if $ A \in K ^{n \times n} $ then
\beq \label{eq.rbao} 
R(A, g) b = b(A) g .
\eeq 
The following lemma {\rm{\cite{FW}}}   characterizes  companion matrices  in terms
of reachability matrices.  To make our note self-contained 
 we include a proof.

\begin{lemma}  \label{la.fab} 
For a matrix  $ A \in K^{ n \times n} $ 
the following statements are equivalent.
\begin{itemize} 
 \item[\rm{(i)}]  
 $ A = F_p $
for some $p \in K^n$. 
 \item[\rm{(ii)}] 
$  R ( A, g ) = g(A)$   
for  all $ g \in K^n$.
\item[\rm{(iii)}]  
$ R ( A, e_0) = e_0(A) = I _n$.
\end{itemize} 
\end{lemma}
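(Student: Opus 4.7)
The plan is to prove the cyclic chain (i) $\Rightarrow$ (ii) $\Rightarrow$ (iii) $\Rightarrow$ (i). The single observation that drives everything is the shift action of a second companion matrix on the standard basis, $F_p e_i = e_{i+1}$ for $i = 0,\dots,n-2$, which iterates to $F_p^{\,i} e_0 = e_i$ for $i = 0,\dots,n-1$.

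For (i) $\Rightarrow$ (ii), I would assume $A = F_p$ and use the shift identity first to obtain $g(A) e_0 = \sum_i g_i A^i e_0 = \sum_i g_i e_i = g$ for every $g \in K^n$. Then, column by column, the $k$-th column of $R(A,g)$ is $A^k g = A^k g(A) e_0 = g(A) A^k e_0 = g(A) e_k$, which matches the $k$-th column of $g(A)$, yielding $R(A,g) = g(A)$.

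The implication (ii) $\Rightarrow$ (iii) is immediate on specializing $g = e_0$, since $e_0(z) = 1$ forces $e_0(A) = I_n$. For (iii) $\Rightarrow$ (i), the hypothesis $R(A, e_0) = I_n$ is unpacked directly at the level of columns: one reads off $A^i e_0 = e_i$ for $i = 0, \dots, n-1$, which gives $A e_i = A\cdot A^i e_0 = A^{i+1} e_0 = e_{i+1}$ for $i = 0,\dots,n-2$. This pins the first $n-1$ columns of $A$ to those of a second companion matrix; the final column $A e_{n-1}$ is unconstrained and defines $p$, whence $A = F_p$.

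I do not foresee any real obstacle — the whole lemma is an exercise in tracking how $A$ acts on the standard basis, anchored by the vector $e_0$. The only point calling for a little care is (i) $\Rightarrow$ (ii), where the temptation to plug $b = e_k$ into the earlier identity $R(A,g)b = b(A)g$ collapses to the tautology $A^k g = A^k g$ and teaches nothing. The correct angle is instead to fix the column index $k$ and use commutativity of $A^k$ with $g(A)$, after first establishing $g(A) e_0 = g$ from the shift property of $F_p$.
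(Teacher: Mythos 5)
Your proposal is correct and follows essentially the same route as the paper: the same cycle (i) $\Rightarrow$ (ii) $\Rightarrow$ (iii) $\Rightarrow$ (i), anchored in both cases on the shift identity $F_p^{\,i}e_0 = e_i$. The only cosmetic difference is in (i) $\Rightarrow$ (ii), where you verify $R(A,g)=g(A)$ column by column via $A^k g(A)e_0 = g(A)e_k$, whereas the paper expands $g=\sum_i g_i e_i$ and uses linearity of $R(A,\cdot)$ together with $R(F_p,e_i)=F_p^{\,i}$ --- two phrasings of the same computation.
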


\begin{proof}
It is obvious that $ A $ is a companion matrix of the
form  \eqref{eq.com} if and only if
\beq \label{eq.nm}
  A [ e_0,  e_1 , \dots , e_{n-2} ]
 =
   [ e_1,  e_2 , \dots , e_{n-1} ].
\eeq 
(i) $\Ra$ (ii):   We have to show that 
 \,$R ( F_p, g ) = g(F_p)$\,
holds
for all $g = \sum _{i = 0} ^{n-1} g_i e_i $. 
From    
\beq \label{eq.eifo} 
e_i = F_p ^i e_0 , \,\:  i = 0, \dots , n-1 ,
\eeq 
follows  \,$ R ( F_p , e_0 )  = I _n $.
Therefore 
\[
 R ( F_p , e_i ) = F_p ^i  \, R ( F_p , e_0 ) 
\]
implies  $   R ( F_p , e_i ) =  F_p ^i $.
Hence 
\[
    R ( F_p , g  ) = \sum \nolimits _{i = 0} ^{n-1}
g_i \,  R ( F_p , e_i  )
=
  \sum \nolimits _{i = 0} ^{n-1}  g_i  F_p ^i =
g(F_p).
\]
The implication 
(ii) $ \Ra $ (iii) is obvious.

(iii)  $ \Ra $ (i):  
From  $   R ( A, e_0 ) = I _n $   follows \eqref{eq.nm}. Therefore, 
$A$ is a companion matrix. 
\end{proof}

 It was shown in    \cite{Im} and  \cite[Proposition A.2]{B4} 
that matrices  in  second companion form satisfy an identity 
with ``curiously commuting vectors'',  stated  in   \eqref{eq.bca}  below. 
We note that the identity  \eqref{eq.bca}   is equivalent to the  
symmetry of the map $h(A; b,g) $.

\begin{theorem}  \label{cor.blin} 
Let $A \in K^{n \times n}  $.
We have $A = F_p $ for some $p \in K^n$ if and only if 
\beq  
\label{eq.bca} 
   [g, Ag, \dots , A^{n-1} g] b=   [b, Ab, \dots , A^{n-1} b] g . 
\eeq
\end{theorem}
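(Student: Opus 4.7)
My plan is to recast both sides of \eqref{eq.bca} in polynomial form using \eqref{eq.rbao} and then invoke Lemma \ref{la.fab}; essentially no new computation is required beyond what is already packaged in that lemma.

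The preparatory step is the following rewriting. For any $A \in K^{n\times n}$ and any pair $(b,g) \in K^n \times K^n$, formula \eqref{eq.rbao} gives $R(A,g)b = b(A)g$, and by swapping the roles of $b$ and $g$ also $R(A,b)g = g(A)b$. Hence \eqref{eq.bca} is equivalent to the polynomial identity $b(A)g = g(A)b$ holding for all $b,g \in K^n$.

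For the ``only if'' direction I would simply apply Lemma \ref{la.fab}(ii): when $A = F_p$ one has $R(F_p,g) = g(F_p)$, so multiplying on the right by $b$ and combining with \eqref{eq.rbao} yields
\[
 g(F_p)b \;=\; R(F_p,g)b \;=\; b(F_p)g,
\]
which is exactly the polynomial form of \eqref{eq.bca}.

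For the ``if'' direction I would specialize the assumed identity to $g = e_0$. Since the first column of $R(A,b)$ is $b$, we have $R(A,b)e_0 = b$, so \eqref{eq.bca} collapses to $R(A,e_0)b = b$ for every $b \in K^n$. This forces $R(A,e_0) = I_n$, and Lemma \ref{la.fab}(iii) then identifies $A$ as a second companion matrix.

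The argument encounters no genuine obstacle; the only small trick worth flagging is the observation $R(A,b)e_0 = b$, which lets the quantifier over $g$ be reduced to the single test case $g = e_0$ and hands the problem off to condition (iii) of Lemma \ref{la.fab}. Everything else is direct rewriting via \eqref{eq.rbao}.
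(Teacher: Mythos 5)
Your proof is correct and follows essentially the same route as the paper: both reduce \eqref{eq.bca} to Lemma~\ref{la.fab} via the identity \eqref{eq.rbao}. The only (harmless) deviation is in the converse, where the paper passes through condition (ii) of the lemma ($R(A,b)=b(A)$ for all $b$, read off from the identity holding for all $g$), while you specialize to $g=e_0$ and invoke condition (iii) via $R(A,b)e_0=b$; both are immediate consequences of the same lemma.
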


\begin{proof}   
According to Lemma~\ref{la.fab} 
we have  $ A = F_p $ if and only if  
$ b(A)  = R(A, b) $  for all $b \in K^n $,
that is,  
\beq \label{eq.wmg}
 b(A) g  = R(A, b)g  \:\: \mbox{for all}\:\:   b , g \in K^n .
\eeq 
Then \eqref{eq.rbao} implies that  \eqref{eq.wmg} 
 is equivalent to 
\,$ R(A, b)g =   R(A, g) b$\, for all $  g, b \in K^n $.  
\end{proof}

We now deal with the map $   u(A; b, g) $.

\begin{theorem}  \label{thm.blw} 
Let $ A \in K ^{n \times n} $. The following statements are equivalent. 
\begin{itemize} 
 \item[\rm{(i)}]  $ A = F_p $ for some $ p \in K^n$.
\item[\rm{(ii)}] 
We have 
\beq \label{eq.jmtrs} 
 \bbm 
 e_{n-1} ^T   A^{n-1} 
\\
 e_{n-1} ^T   A^{n-2}  \\
\vdots 
\\
.
\\
e_{n-1} ^T   A
\\
e_{n-1} ^T 
\ebm = 
\left[ 
\begin{array}{lcllll}
1  &   e_{n-1} ^T A e_{n-1} &   & \dots &   &   e_{n-1} ^T A^{n-1}  e_{n-1} 
\\ 
&  1   &  
 & \dots &   &  e_{n-1} ^T A^{n-2}  e_{n-1}  \\
  &  & \ddots  &  & &   \vdots 
\\
 &&    &  &  &   
\\
& &   &    &  1   &  e_{n-1} ^T A  e_{n-1}  
 \\
&&    &  &  &    1
\end{array} 
\right] .
\eeq  
  \item[\rm{(iii)}]  The bilinear map $   u(A; b, g) $ in \eqref{eq.sbu}  
satisfies 
$   u(A; b, g) =   u(A; g, b) $\,  for all  
$ b , g \in K^n   $.
\end{itemize}
\end{theorem}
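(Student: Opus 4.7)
The plan is to prove (i) $\Rightarrow$ (ii) $\Rightarrow$ (iii) $\Rightarrow$ (i); the first two implications are direct verifications, and the third is the substantive step that extracts the companion structure from bilinear symmetry.

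For (i) $\Rightarrow$ (ii), assume $A = F_p$. The shift property $F_p e_i = e_{i+1}$ for $i < n-1$ enables an induction on $m$ establishing
\[
 e_{n-1}^T F_p^m \;=\; e_{n-1-m}^T + c_1 e_{n-m}^T + \cdots + c_m e_{n-1}^T,
\]
where $c_j = e_{n-1}^T F_p^j e_{n-1}$. Stacking these $n$ rows produces the left-hand side of \eqref{eq.jmtrs}, which matches the upper-triangular Toeplitz right-hand side by construction.

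For (ii) $\Rightarrow$ (iii), dotting the $m$-th row of \eqref{eq.jmtrs} against $g$ gives $e_{n-1}^T A^m g = \sum_{s=0}^m c_s g_{n-1-m+s}$. Substituting this into the $k$-th component $\sum_{i=k}^{n-1} b_i \, e_{n-1}^T A^{i-k} g$ of $u(A;b,g)$, one checks that the coefficient of each monomial $b_i g_j$ equals $c_{i+j-n+1-k}$ when $i+j \geq n-1+k$ (in which case automatically $i,j \geq k$) and vanishes otherwise. Since this coefficient is symmetric in $(i,j)$, the map $u(A;b,g)$ is symmetric in $(b,g)$.

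For (iii) $\Rightarrow$ (i), specialize at $b = e_{n-1}$. The $k$-th component of $u(A;e_{n-1},g)$ collapses to $e_{n-1}^T A^{n-1-k} g$, whereas $u(A;g,e_{n-1})_k = \sum_{i \geq k} g_i c_{i-k}$; equating these as $g$ varies recovers \eqref{eq.jmtrs} row by row. From \eqref{eq.jmtrs}, the identity for $m=1$ reads off the last row of $A$ in companion form, and comparing $e_{n-1}^T A^{m+1}$ with $(e_{n-1}^T A^m) A$ then recovers each row above by induction. The main obstacle is pinning down the top row of $A$, since \eqref{eq.jmtrs} alone only constrains the left iterates $e_{n-1}^T A^m$ and determines only rows $1,\ldots,n-1$ of $A$ in companion form. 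Closing the argument requires using further consequences of (iii)---such as the Hankel-type symmetry $e_{n-1}^T A^i e_j = e_{n-1}^T A^j e_i$ coming from the $k=0$ component---to derive $R(A,e_0) = I_n$, whereupon Lemma~\ref{la.fab}(iii) yields $A = F_p$.
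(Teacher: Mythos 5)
Your first two implications are sound. The verification of (i) $\Rightarrow$ (ii) via $e_{n-1}^TF_p^m=\sum_{s=0}^{m}c_s\,e_{n-1-m+s}^T$ is correct, and your direct proof of (ii) $\Rightarrow$ (iii) by matching the coefficient of each monomial $b_ig_j$ is a genuine addition: the paper instead proves (i) $\Rightarrow$ (iii) by checking $L(A,e_i)=Q(A,e_i)$ on every basis vector and gets (iii) $\Rightarrow$ (ii) by the single specialization $i=n-1$ (your $b=e_{n-1}$ computation is that same specialization). The problem is the closing step (iii) $\Rightarrow$ (i). You correctly observe that \eqref{eq.jmtrs} constrains only the left iterates $e_{n-1}^TA^m$ with $m\le n-1$, so the row-by-row induction determines only rows $1,\dots,n-1$ of $A$; but you then leave the top row to unspecified ``further consequences of (iii)'' and never actually derive $R(A,e_0)=I_n$. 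Since that is precisely the hard direction of the theorem, the proof is incomplete as written.

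Moreover, this gap cannot be filled: neither (ii) nor (iii) implies (i). For $n=2$ take
\[
A=\bbm 1 & 0 \\ 1 & 0 \ebm .
\]
Then $u(A;b,g)=[\,b_0g_1+b_1g_0,\ b_1g_1\,]^T$ is symmetric in $(b,g)$, and \eqref{eq.jmtrs} reduces to the true identity $e_1^TA=[1,\ e_1^TAe_1]$, yet $A\neq F_p$ because the $(0,0)$ entry is nonzero. (For $n=3$ one checks that (iii) amounts to $a_{20}=0$, $a_{21}=1$, $(A^2)_{20}=1$, $(A^2)_{21}=a_{22}$, i.e.\ $a_{10}=1$ and $a_{11}=0$, leaving $a_{00}$ and $a_{01}$ completely free.) In particular the Hankel symmetry $e_{n-1}^TA^ie_j=e_{n-1}^TA^je_i$ that you propose to invoke adds nothing new: for $n=2$ it again says only $a_{10}=1$ and is silent about $a_{00}$. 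You have in fact put your finger on a defect in the theorem itself: the paper's proof of (ii) $\Rightarrow$ (i) runs the same induction, which stops after determining row $1$ (reaching row $0$ would require control of $e_{n-1}^TA^{n}$, which \eqref{eq.jmtrs} does not provide), and its concluding assertion $A=H_n=F_a$ is unjustified. Conditions (ii) and (iii) pin down only the last $n-1$ rows of a companion matrix, so some additional hypothesis on the first row (or on $\chi_A$, as in Theorem~\ref{thm.cr}) is needed before one can conclude (i).
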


\begin{proof} 
Define 
\[ 
L(A, g) = 
\begin{bmatrix} 
e_{n-1} ^T g  &   e_{n-1} ^T A g  & \dots &   &   e_{n-1} ^T A^{n-1}  g
\\
&  e_{n-1} ^T g  &  e_{n-1} ^T A g  & \dots &    e_{n-1} ^T A^{n-2}  g
\\
&   &\ddots  & \ddots &   \vdots 
\\
&   &    &  e_{n-1} ^T g  &  e_{n-1} ^T A  g 
 \\
&   &  &  &    e_{n-1} ^T  g
 \end{bmatrix} 
\]
and 
\[
Q (A , g ) = 
\left[  
\begin{array}{l}
e_{n-1} ^T (g_0 I _n +  g_1 A +     g_2 A ^2 +   
 \cdots  
  g_{n-1} A^{n-1} )  
\\
e_{n-1} ^T (g_1 I_n  +  g_2 A + 
 \cdots 
  g_{n-1} A^{n-2} )  
\\  \vdots 
\\
e_{n-1} ^T (  g_{n-2} I _n  +   g_{n-1} A  ) 
\\ e_{n-1} ^T (  g_{n-1} I _n )  
\end{array} 
\right]  .
\] 
Then $  u(A; b, g) $ in  \eqref{eq.sbu}  can be written as 
\[  u(A; b, g)  = L(A, g) b .
\]
We have   $  u(A; g, b) =  Q ( A , g) b $.
Hence (iii) holds if and only if 
\beq \label{eq.jbpa} 
 L(A, g) =   Q ( A , g) \ \mbox{for all} \ g \in K^n. 
\eeq
The maps $ g \mapsto  L(A, g) $ and $ g \mapsto   Q ( A , g)  $
are linear. 
Hence \eqref{eq.jbpa}  is equivalent to
\beq \label{eq.font} 
 L(A, e_{i}) =   Q ( A , e_i) ,  \: i = 0, \dots , n-1 . 
\eeq 
In the case $ i = n-1 $ the corresponding matrices are 
\[
Q ( A , e_{n-1} ) = 
\begin{bmatrix} 
 e_{n-1} ^T A^{n-1}  
\\
  e_{n-1} ^T A^{n-2}  
\\
 \vdots 
\\
  e_{n-1} ^T A  
 \\
  e_{n-1} ^T 
 \end{bmatrix}
\]
and
\[
 L(A, e_{n-1}) = 
\begin{bmatrix} 
1  &   e_{n-1} ^T A e_{n-1}   & \dots &   &   e_{n-1} ^T A^{n-1}  e_{n-1} 
\\
&  1   &  e_{n-1} ^T A e_{n-1}   & \dots &    e_{n-1} ^T A^{n-2}  e_{n-1} 
\\
&   &\ddots  & \ddots &   \vdots 
\\
&   &    &  1   &  e_{n-1} ^T A  e_{n-1}  
 \\
&   &  &  &    1
 \end{bmatrix} .
\] 

(i)  $ \Ra$ (iii): 
Suppose $ A = F_p $. 
Then $ A ^s e_ i = A^{s + i } e_0 $ if $ 0 \le i \le n -1$ and $ s \in \N_0$. 
We have 
\[
L ( A , g) e_r =  
[ e_{n-1} ^T A^r g ,  \dots ,  e_{n-1} ^T A g, 
e_{n-1} ^T g,  0, \dots , 0 ]^T. 
\]
Thus 
\[
L ( A , e_i ) e_r =   
[ e_{n-1} ^T  A^{r + i} e_0  ,  \dots ,  e_{n-1} ^T A^{i +1} e_0 , 
e_{n-1} ^T A^i e_0 ,  0, \dots , 0 ] ^T. 
\] 
From  
\[
Q ( A , e_i )   =  \bbm  e_{n-1} ^T  A^i \\ 
\vdots\\    e_{n-1} ^T  A\\
  e_{n-1} ^T \\ 0 \\ \vdots \\ 0
\ebm 
\]
we obtain 
$Q ( A , e_i ) e_r = L ( A , e_i ) e_r  $, $ r = 0 , \dots ,  n-1$. 
Hence \eqref{eq.font} is satisfied, which is equivalent to (iii).

(iii) $ \Ra$ (ii):  
 We have seen that (iii) is equivalent to 
\eqref{eq.font}.   Choosing  $ i = n-1$ in \eqref{eq.font}
yields $Q(A, e_{n-1}) = L(A, e_{n-1}) $, that is \eqref{eq.jmtrs}.   

(ii) $\Ra$ (i):  Suppose \eqref{eq.jmtrs} holds.   Let 
\[
A^k = \Big[ a_{ij} ^{(k)}   \Big]_{i,j= 0}^{n-1} \, ,  \:  \: k = 0, 1, \dots, n-1. 
\]  
We show that the rows of $A$ are the rows of a companion  matrix.
The proof is by induction.  The induction hypothesis is  
\begin{multline}  \label{eq.ink} 
  e_{n -\nu}^T  A = 
e_{n-\nu -1} ^T + a_{n-\nu, n-1}  e_{n-1} ^T  = [ 0 , \dots , 0 ,1, 0, \dots , 0, a_{n-\nu, n-1} ] ,
\\
1 \le \nu < n-1 . 
\end{multline}
From \eqref{eq.jmtrs} we obtain 
\[   e_{n- 1}^T  A  = [ 0 , \dots , 0 ,1, e_{n-1 } ^T A e_{ n-1} ] 
=  [ 0 , \dots , 0 ,1,  a_{n-1 , n-1} ]  . 
\]
Hence  \eqref{eq.ink} is satisfied for $ \nu = 1$. 
Assume that \eqref{eq.ink}  is valid for $ \nu = 1, \dots , k $. 
Then
$ 
   A =  \bbm   W_k \\ H_k \ebm  $ 
with  
\beq \label{eq.hwg} 
\bbm e_{n- k} ^T  \\ \vdots \\ e_{n-2}  ^T \\ e_{n-1}  ^T  \ebm  A 
= H_k =  \bbm
0   & \dots  &  0 & 1 & 0 & \dots & 0 &  a_{ n -k ,  n-1 } 
\\
&   &  & & \ddots     & &  & 
\\ 
0   & \dots  &  0  & 0 & \dots &1 & 0 &  a_{n-2 ,  n-1 } 
\\
0   & \dots  & 0   & 0 & \dots &0 & 1 &  a_{n-1  ,  n-1 } 
\\ 
\ebm _{k \times n} .
\eeq  
Set 
\[ e_{n-1 - k } ^T A = [ \tilde a_0,   \,   \dots ,   \,   \tilde a_{n-2} , \,  \tilde a_{n-1} ].
\]
From
 \eqref{eq.jmtrs}   and
 \eqref{eq.hwg} 
follows 
\begin{multline*}  
e_{n-1} ^T A ^{k+1} =  
 ( e_{n-1} ^T A ^{k} ) A = 
[0, \dots , 0, 1, 
a_{n-1, n-1} ,   a_{n-1, n-1} ^{(2)} , \dots , a_{n-1, n-1} ^{(k)} ]  
\\
\bbm 
. & .  & \dots  & .  &  .  & \dots  &   . & .
\\
\tilde a_0
.  & \dots  & \tilde a_{ n - k -2}  &   \tilde a_{ n-k -1 } 
& . & \dots &   \tilde a_{n- 2} 
 &  \tilde a_{n-1 }
\\
0 &  \dots & 0                                 &       1 
&      0   & \dots  &   0 
 &  a_{n- k, n-1}
\\
0 & \dots  & .  & 0 & 1  & \dots  &   0 &   a_{n-  k + 1 , n-1}
\\
 . &  \dots  & . & . & .  & \dots    & .  & . 
\\
0 & \dots   &  & 0  & \dots   &  1 & 0   & a_{n-  2 , n-1}
\\
0 & \dots   & 0 &  0  &\dots   &  0 &  1 & a_{n-  1, n-1}
 \ebm  = 
\\
[ \tilde a_{ 0} , \tilde a_1  \dots,  \tilde  a_{ n - k - 2} ,   \, 
 \tilde a_{ n - k  -1}  +   a_{ n-1 , n-  1},   
\\
 \tilde a_{ n - k }  +   a_{ n-1 , n-  1}  ^{(2)} , \dots , 
 \tilde a_{n-2} +  a_{ n-1 , n-  1}  ^{(k)} ,   \,  a_{ n-  1, n-1}  ^{(k+1)}    ] .
\end{multline*} 
The assumption \eqref{eq.jmtrs} implies
\[
e_{n-1} ^T A ^{k+1} 
=  
  [0, \dots , 0,  1, 
  a_{ n-1, n-1} ,  a_{ n-1, n-1}   ^{(2)} , \dots ,   a_{ n-1 , n-  1}  ^{(k)},
 a_{ n-1 , n-  1}  ^{(k+1)} ]. 
\]
Hence  
$ [ \tilde a_{ 0} , \dots  ,  \tilde a_{ n - k - 3} ] = [0, \dots , 0] $, and \,
$ \tilde  a_{ n - k - 2} = 1  $.
Moreover
\begin{multline*}
[\tilde a_{n - k  -1}  +   a_{ n-1 , n-  1},   
\tilde a_{ n - k }  +   a_{ n-1 , n-  1}  ^{(2)} , \dots , 
\tilde  a_{n-2} +  a_{ n-1 , n-  1}  ^{(k)} ]
=
\\
[
 a_{ n-1, n-1} , 
 a_{ n-1, n-1}   ^{(2)} , \dots ,   a_{ n-1 , n-  1}  ^{(k)} ]
\end{multline*}
yields  
\,$ [ \tilde  a_{ n - k  -1} ,  \tilde   a_{n - k }  , \dots , 
\tilde a_{n-2} ] = [0, \dots , 0]  $.
This proves  \eqref{eq.ink}  in the case $ \nu = k+1$, and we obtain 
$A = H_n = F_a $ with
$ a = [a_{0,n-1} , \,  \dots , \,  a_{n-1,n-1} ]^T$. 
\end{proof}

It was proved in  \cite{B4} that the map  $( g, b)  \mapsto u(A; g, b) $ 
is symmetric if  $A = F_p $. 
The proof of  \cite[Proposition A.5]{B4} 
with the  
 identities
\[
e_{n-1} ^T  \left(\sum _{j = k} ^{n-1} b_j A^{j- k } \right) g 
=
e_{n-1} ^T \left(\sum _{j = k} ^{n-1} g_j A^{j- k } \right) b ,
\:\, k=0, 1, \dots , n-1, 
\] 
is rather involved. 
We remark  that 
\[
L(F_p, e_{n-1} ) = 
\bbm   1 & p_{n-1} & p_{n-2} & \dots & p_1 
\\
 & 1 & p_{n-1}   & \dots & p_2
\\
& & \ddots   & \ddots &  
\\
&  &  &.  &  .
\\
&  &  &  & 1 
\ebm  .
\]

The ``crossover'' identity   \eqref{eq.cim} 
below appears in 
 \cite[Proposition A.3]{B4}.
\begin{theorem}  \label{thm.cr}    
Suppose  $ A \in K^{n \times n} $ and 
\beq \label{eq.chpl} 
 \chi _A (z) = z^n - \sum   \nolimits _{i=0} ^{n-1} p_i z^i =
 z^n  - p(z) . 
\eeq
Let 
$g = [ g_0, \dots , g_{n-1} ]  ^T \in K^n $, 
 $b= [ b_0, \dots , b_{n-1} ]  ^T \in K^n$, 
and $g_n, b_n \in K$.
 The following statements are equivalent.
\begin{itemize} 
  \item[\rm{(i)}]  
The identity 
\beq \label{eq.cim} 
\sum \nolimits _{k=0} ^n   A ^k g_{k} 
( b  +  b_n  p   ) =
\sum \nolimits _{k=0} ^n   A ^k  b_{k}
( g + g_n  p   ) 
\eeq
holds 
for all   $ b, b_n , g , g_n$. 
  \item[\rm{(ii)}]  
The matrix $A$ is a second companion matrix such that 
$ A = F_p $. 
\end{itemize} 
\end{theorem}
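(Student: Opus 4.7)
The plan is to derive both directions from Theorem~\ref{cor.blin} combined with the Cayley--Hamilton theorem. The key observation is that, once $A = F_p$, the identity $A^n = p(A)$ collapses the length-$(n+1)$ sums $\sum_{k=0}^n g_k A^k$ appearing in \eqref{eq.cim} into polynomial evaluations of degree less than $n$, after which the bilinear symmetry of Theorem~\ref{cor.blin} does essentially all the work.

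For the direction (i)$\Ra$(ii), I would specialize \eqref{eq.cim} by setting $b_n = g_n = 0$. The $k = n$ terms then vanish on both sides, and the identity reduces to $g(A)\,b = b(A)\,g$ for all $b,g \in K^n$. Via \eqref{eq.rbao} this is precisely the symmetry $R(A,g)b = R(A,b)g$, so Theorem~\ref{cor.blin} gives $A = F_q$ for some $q \in K^n$. Comparing the standing hypothesis $\chi_A(z) = z^n - p(z)$ with $\chi_{F_q}(z) = z^n - q(z)$ forces $q = p$, and hence $A = F_p$.

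For the converse (ii)$\Ra$(i), assume $A = F_p$. Cayley--Hamilton combined with \eqref{eq.chpl} yields $A^n = p(A)$, so
\[
\sum_{k=0}^n g_k A^k \;=\; g(A) + g_n\,p(A), \qquad
\sum_{k=0}^n b_k A^k \;=\; b(A) + b_n\,p(A),
\]
and \eqref{eq.cim} transforms into
\[
\bigl(g(A) + g_n p(A)\bigr)(b + b_n p) \;=\; \bigl(b(A) + b_n p(A)\bigr)(g + g_n p).
\]
Expanding both sides yields four products each. The top-order term $g_n b_n\,p(A)\,p$ matches tautologically, while the remaining three cross pairs are matched by three applications of the symmetry $g(A)b = b(A)g$ from Theorem~\ref{cor.blin}, used in turn with the vector pairs $(b,g)$, $(p,g)$, and $(p,b)$.

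The main obstacle is purely organizational: one must keep track of the four-term expansion on each side and verify that every cross product on the left meets its mirror image on the right under the appropriate instance of the symmetry. No ingredient beyond Cayley--Hamilton and Theorem~\ref{cor.blin} enters the argument.
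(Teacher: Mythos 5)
Your proposal is correct and follows essentially the same route as the paper: both directions reduce to Theorem~\ref{cor.blin}, with (i)$\Ra$(ii) obtained by setting $b_n=g_n=0$ and matching characteristic polynomials, and (ii)$\Ra$(i) obtained from $A^n=p(A)$ together with the symmetry $g(A)b=b(A)g$ applied to the pairs $(b,g)$, $(p,g)$, $(p,b)$. The paper merely packages the expansion as the equivalent identity \eqref{eq.nomw} in terms of reachability matrices before verifying it, which is a cosmetic difference.
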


\begin{proof} (i) $\Rightarrow$ (ii): 
Note that   \eqref{eq.cim}  is equivalent to
\beq \label{eq.nomw} 
R(A,  b ) g  +   b_n R(A,      p )g   + g_n  A^n b  
= R( A , g  ) b + g_n R( A ,  p ) b +  b_n  A^n g  .
\eeq 
If  we choose 
$b_{n} = g_{n} = 0$ in 
 \eqref{eq.nomw}   then we obtain 
$  R  ( A ,  b) g =  R ( A ,  g) b $.  
Hence   $ A = F_q$ for some $q \in K^n$
(by  Theorem~\ref{cor.blin}). 
Then \eqref{eq.chpl} implies  $q = p$.

(ii) $\Ra$ (i): 
Assume $ A =F_p$. 
Then  
$ R(A,  b ) g = R( A , g  ) b $ for all $ b, g \in K^n$. 
We also have     $A^n = p(A)$  and therefore  $A^n g =  p(A) g = R(A, p ) g $.
Hence \eqref{eq.nomw} 
holds for all $b,g, b_n , g_n$. 
\end{proof} 

\section{Block companion matrices}   \label{sct.blck} 

In this section 
we extend results of the preceding section
to block companion matrices. 
Let 
$ P_i \in K^{t \times t} $, $ i = 0, \dots , n-1 $,
be the entries of the block matrix   
\[
P = \bbm P_0 \\ \vdots \\ P_{n-1} \ebm  _{nt \times t},
\] 
and    the coefficients of the monic matrix polynomial  
\[
\tilde P (z)  = z^n I_t  -(  z^{n-1}  P_{n-1}  + \dots + P_0)  . 
\]
A linearization \cite{GLR}  of  $ \tilde P(z) $ gives rise to
the {\em{block companion matrix of the second type}} 
\[
  F_P=
\begin{bmatrix}
      0   &  .    &    &    &      &   P_0
\\
      I_t  &  0    & .   &   &      &   P_1
\\
          &   .   &  . & . &       &
\\
          &       &  . &  . &  .   &
\\
          &       &   & . &  .    &   .
\\
      0   &  0    &    &  &  I _t   &   P_{n-1}
\end{bmatrix}
\in K^{nt \times nt}  .
\]
Our main tool is the Kronecker product.  
Let  $ S = ( s_{ij}) \in K ^{ \ell \times m } $
and  $ T \in K^{p \times r} $ then   
$ S \otimes T = ( s_{ij} T ) \in K^{\ell p \times m r}  $. 
Define 
\[
E_0 = e_0 \otimes I_t = [I_t, 0 , \dots , 0 ] ^T,  
\dots,  E_{n-1} = e_{n-1} \otimes I_t = [0, \dots , 0, I_t ] ^T.
\]
To the block matrix 
\[
B = \begin{bmatrix}  B_0 \\ B_1 \\ \vdots \\ B_{n-1}  
\end{bmatrix} = \sum \nolimits _{i= 0} ^{n-1} E_i B_i 
 \in K ^{nt \times  m }
\]
with  $ B_j \in K ^{t \times m }$, $ j = 0, \dots , n-1$, 
 we associate the matrix polynomial
\[
B(z)  = B_0 + z B_1 + \cdots +  z^{n-1} B_{n -1} \in K^{t \times m}[z] . 
\] 
Let $A \in K ^{nt \times nt } $.  We define 
\beq \label{eq.hts} 
B(A) =  ( I _n \otimes B_0 ) + A  ( I _n \otimes B_1)
 +
\cdots + A^{n-1}( I _n \otimes B_{n -1}) \in K^{nt \times n m}.
\eeq
In \eqref{eq.hts} 
we have 
 an example of an operator substitution
(see  \cite{Hau}). 
The matrix 
\[
R(A, B) = [B, AB, \dots ,  A^{n-1} B ] \in K ^{nt \times nm} 
\] 
is the $n$-step reachability matrix of the pair $(A,B) $. 
 Let  $B_i,  G_i \in K^{t \times t} $, $i = 0, \dots , n-1$.
We say that the $ nt \times  t $ matrices   
\[
B = \begin{bmatrix}  B_0 \\ B_1 \\ \vdots \\ B_{n-1} 
\end{bmatrix} 
\quad  {\rm{and}} \quad 
G  = \begin{bmatrix}  G_0 \\ G_1 \\ \vdots \\ G_{n-1} 
\end{bmatrix} 
\]
are {\em{blockwise commuting}} if 
 \,$B_i G_j = G_j B_i $,  $i, j = 0, \dots , n-1$. 
Adapting arguments of the preceding section 
to  Kronecker products   we  prove the following  result.

\begin{theorem} \label{thm.chrr} 
Let $ A \in K^{nt \times nt}$. 
The following statements are equivalent. 
\begin{itemize} 
\item[\rm{(i)}] 
$ A = F_P $ for some $P \in K^{nt \times t }   $. 
\item[\rm{(ii)}]  
$ R(A,E_0) = I_{nt} $.  
\item[\rm{(iii)}] 
$R(A,G) =  G(A) \:\: for \;\; all  \:\: G \in  K ^{nt \times m }$.
\item[\rm{(iv)}] 
The identity  
\beq    \label{eq.crg} 
[ B , AB , \dots , A^{n-1}  B ] G
= 
[ G , AG , \dots , A^{n-1} G ] B 
\eeq 
holds 
for all blockwise commuting matrices $B,G \in K^{nt \times t }$.
\end{itemize} 
\end{theorem}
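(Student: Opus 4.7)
The plan is to carry the scalar scheme of Lemma~\ref{la.fab} and Theorem~\ref{cor.blin} over to the block setting, using the single Kronecker identity $(I_n \otimes C)\,E_j = E_j\,C$ for $C \in K^{t \times t}$ as the only new ingredient. I would establish the cycle (i) $\Leftrightarrow$ (ii) $\Leftrightarrow$ (iii), then (iii) $\Rightarrow$ (iv) $\Rightarrow$ (ii) to close the loop.

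First I would observe that $[E_0, \dots, E_{n-1}] = I_{nt}$, so (ii) is the same as $A^i E_0 = E_i$ for $i = 0, \dots, n-1$, equivalently $A E_i = E_{i+1}$ for $i = 0, \dots, n-2$. This is the block analogue of \eqref{eq.nm}: setting $P := A E_{n-1}$, the first $n-1$ block columns of $A$ are forced to form the block shift, and (i) $\Leftrightarrow$ (ii) follows exactly as in Lemma~\ref{la.fab}. For (ii) $\Rightarrow$ (iii) I would decompose $G = \sum_i E_i G_i$ with $G_i \in K^{t \times m}$ and compare the $j$-th block column of each side: the left-hand side gives $A^j G = \sum_i A^{i+j} E_0 G_i$ by (ii), while the Kronecker identity turns the right-hand side into $\sum_i A^i (I_n \otimes G_i) E_j = \sum_i A^i E_j G_i$, which again collapses to $\sum_i A^{i+j} E_0 G_i$ by (ii). The converse (iii) $\Rightarrow$ (ii) is the specialization $G = E_0$, where $G(A) = I_n \otimes I_t = I_{nt}$.

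For (i) $\Rightarrow$ (iv) I would invoke (iii) to expand
\[
R(A, B)\,G = B(A)\,G = \sum_{i,j} A^i E_j B_i G_j, \qquad R(A, G)\,B = G(A)\,B = \sum_{i,j} A^i E_j G_i B_j,
\]
reduce every $A^i E_j$ to $A^{i+j} E_0$ via (ii), and then equate the two double sums using the blockwise commutativity $B_i G_j = G_j B_i$ together with a swap of summation indices. For (iv) $\Rightarrow$ (ii) I would take $B = E_0$, which commutes blockwise with every $G \in K^{nt \times t}$ since $B_0 = I_t$ and $B_k = 0$ for $k \geq 1$. The identity \eqref{eq.crg} then reads $R(A, E_0)\,G = R(A, G)\,E_0 = G$, where the last equality holds because $E_0$ extracts the first block column of $R(A, G)$. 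Running $G$ through $E_0, \dots, E_{n-1}$ produces $R(A, E_0)\,E_k = E_k$ for each $k$, hence $R(A, E_0) = I_{nt}$.

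The principal technical obstacle is the double-sum bookkeeping in (i) $\Rightarrow$ (iv): one must first slide $I_n \otimes B_i$ past $E_j$ by the Kronecker identity, then collapse $A^i E_j$ to $A^{i+j} E_0$ via (ii), and finally align the two resulting double sums by the blockwise commutativity of $B_i$ and $G_j$ combined with a relabeling of indices. Everything else is a direct transcription of the scalar arguments from Lemma~\ref{la.fab} and Theorem~\ref{cor.blin} into Kronecker notation, with $e_0$ replaced by $E_0$ and scalar multiplication replaced by the right action of the $t \times t$ blocks.
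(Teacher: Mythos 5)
Your proposal is correct and follows essentially the same route as the paper: the same implication cycle, the same Kronecker identity $(I_n\otimes C)E_j=E_jC$, and the same specialization of \eqref{eq.crg} at $E_0$ to obtain $R(A,E_0)=I_{nt}$. The one place where you are more explicit than the paper is (iii) $\Rightarrow$ (iv): the paper's displayed computation derives $B(A)G=G(A)B$ from blockwise commutativity alone, whereas you also collapse $A^iE_j$ to $A^{i+j}E_0$ before using commutativity and relabeling indices --- a step that is genuinely needed, since for a general $A$ blockwise commutativity by itself does not yield $B(A)G=G(A)B$.
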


\begin{proof} 
We proceed along the lines suggested by a referee. 
\begin{description} 
\item[\rm{(ii) $\Ra$ (i):}] 
The implication is obvious. 
\item[\rm{(i) $\Ra$ (iii):}] 
For $ A \in K^{nt \times nt} $ we have 
\[
R (A, G ) = \sum \nolimits _{i = 0 } ^{n-1}  R (A, E_i ) ( I_n  \otimes G_i ) .
\]
If $ A = F_P $ then $  R (A, E_i )  =  F_P ^i $, and we obtain $ R (A, G ) = G(A)$.
\item[\rm{(iii) $\Ra$ (iv):}] 
If  $B$ and $G$ are blockwise commuting then 
\[ 
B(A) G =  \sum \nolimits_{i , j = 0} ^{n-1} 
 A ^i ( I_n  \otimes G_i ) ( I_n \otimes B_ j)  =  G(A) B.
\]
Thus  (iii) implies 
$ R(A, B) G = R(A, G) B$, which is the target identity \eqref{eq.crg}. 
\item[\rm{(iv) $\Ra$ (ii):}]  
Choose  $G= E_0 $ in \eqref{eq.crg}.  Then it  follows from  $ R(A, B)  E_0 = B$  
  that $ R(A, E_0) B = B $  holds for all $ B \in K ^{nt \times t}$.
Hence $ R(A, E_0) = I _{nt} $. 
\end{description} 
\end{proof}

{\bf{Acknowledgement.}} 
We are grateful to a referee for detailed comments and 
  valuable  suggestions,  which helped us to improve the paper significantly.

\end{document}